\newdimen\theight
\def\TeXref#1{%
             \leavevmode\vadjust{\setbox0=\hbox{{\tt
                     \quad\quad  {\small \textrm #1}}}%
             \theight=\ht0
             \advance\theight by \lineskip
             \kern -\theight \vbox to
             \theight{\rightline{\rlap{\box0}}%
             \vss}%
             }}%
\newcommand{\R}{\mathbf{R}}
\newcommand{\Q}{\mathbf{Q}}
\newcommand{\N}{\mathbf{N}}
\theoremstyle{plain}
\newtheorem{theorem}{Theorem}[section]
\newtheorem{lemma}[theorem]{Lemma}
\newtheorem{prop}[theorem]{Proposition}
\theoremstyle{definition}
\theoremstyle{remark}
\newtheorem{rem}{Remark}
\newtheorem{claim}{Claim}
\newcommand{\MM}{\mathcal{M}}
\newcommand{\NN}{\mathcal{N}}
\begin{document}

\title{Non-reduction of relations in the Gromov space to Polish actions}

\author[J.A. \'Alvarez L\'opez]{Jes\'us A. \'Alvarez L\'opez}
\address{Departamento de Xeometr\'{\i}a e Topolox\'{\i}a\\
         Facultade de Matem\'aticas\\
         Universidade de Santiago de Compostela\\
         15782 Santiago de Compostela\\ Spain}
\email{jesus.alvarez@usc.es}

\author[A. Candel]{Alberto Candel} 
\address{Department of Mathematics\\ 
	California State University at Northridge\\ 
	18111 Nordhoff Street\\
	Northridge, CA 91330\\ U.S.A.}
\email{alberto.candel@csun.edu}

\thanks{Research of authors supported by Spanish Ministerio de Ciencia e Innovaci\'on, grants MTM2011-25656 and MTM2008-02640}
\date{}
\subjclass{03E15, 54H05, 54H20, 54E50}
\keywords{Gromov space, Gromov-Hausdorff metric, quasi-isometry}

\maketitle

\begin{abstract}
  It is shown that, in the Gromov space of isometry classes of pointed
  proper metric spaces, the equivalence relations defined by existence
  of coarse quasi-isometries or being at finite Gromov-Hausdorff
  distance, cannot be reduced to the equivalence relation defined by
  any Polish action.
\end{abstract}

\tableofcontents

\section{Introduction}\label{s: intro}

Gromov~\cite[Chapter~3]{Gromov1999},~\cite{Gromov1981} described a
space, denoted here by \(\MM_*\), whose points are isometry classes of
pointed complete proper metric spaces. It is endowed with a topology
which resembles the Tychonov topology of \(\R^\N\), or the compact
open topology on the space of continuous functions \(C(\R)\). It also
supports several equivalence relations of geometric interest, like the
relation of being at finite Gromov-Hausdorff distance, $E_{GH}$, and
the relation of being (coarsely) quasi-isometric, $E_{QI}$.

The following concepts relate the complexity of two equivalence
relations on topological spaces, \(E\) over \(X\) and \(F\) over
\(Y\). A map \(\theta : X\to Y\) is called \emph{\((E,F)\)-invariant}
if \(x E x' \Longrightarrow \theta(x) F \theta(x')\) ($\theta$ induces
a mapping $\bar\theta:X/E\to Y/F$). It is said that \(E\) is
\emph{Borel reducible} to \(F\), denoted by \(E\leq_BF\), if there is
an \((E,F)\)-invariant Borel mapping \(\theta : X\to Y\) such that
\(xEx'\Leftrightarrow\theta(x)F\theta(y)\) ($\bar\theta:X/E\to Y/F$ is
injective).  If $E\le_B F$ and $F\le_BE$, then $E$ is said to be
\emph{Borel bi-reducible} with $F$, and is denoted by $E\sim_BF$.  If
the map $\theta$ can be chosen to be continuous, then the terms
``\emph{continuously reducible}'' and ``\emph{continuously
  bi-reducible}'' are used, with notation ``$\le_c$'' and
``$\sim_c$''.

For an example of an equivalence relation, let $G$ be a Polish group acting continuously on a Polish space $X$ (a Polish action). We then let $E^X_G$ denote the orbit equivalence relation whose equivalence classes are exactly the $G$-orbits. For instance,
Hjorth's theory of turbulence~\cite{Hjorth2000},~\cite{Hjorth2002} is
valid for relations defined by Polish actions. The following is our
main result.

\begin{theorem}\label{t: E_GH and E_QI}
  \(E_{GH}\nleq_B E_G^X\) and \(E_{QI}\nleq_B E_G^X\) for any Polish
  group \(G\) and any Polish \(G\)-space \(X\).
\end{theorem}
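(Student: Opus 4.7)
The plan is to reduce Kechris and Louveau's equivalence relation $E_1$ on $\R^\N$---defined by $x\,E_1\,y$ iff $x_n=y_n$ for all but finitely many $n$---to both $E_{GH}$ and $E_{QI}$. A classical theorem of Kechris and Louveau asserts that $E_1\not\le_B E^X_G$ for every Polish action $G\curvearrowright X$. Since Borel reductions compose, it therefore suffices to build Borel (indeed continuous) reductions $E_1\le_B E_{GH}$ and $E_1\le_B E_{QI}$; the theorem of the excerpt then follows by contraposition.

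To build such a reduction, I would construct a continuous map $\Phi:\R^\N\to\MM_*$ that encodes each sequence $x=(x_n)$ as a pointed proper metric space obtained by attaching, along a common spine, a sequence of gadgets $W_n(x_n)$ at distances $r_n\to\infty$ from the basepoint. Each gadget $W_n(t)$ should carry a local metric invariant depending injectively on $t$ (say, the length of a distinguished edge or the diameter of a decorated ball), while the scales $r_n$ must grow fast enough that distinct gadgets do not interfere with one another. Continuity of $\Phi$ in the pointed Gromov--Hausdorff topology of $\MM_*$ is then a routine verification.

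The forward implication is easy: if $x\,E_1\,y$, the spaces $\Phi(x)$ and $\Phi(y)$ agree outside a bounded neighborhood of the basepoint, which forces both $\Phi(x)\,E_{GH}\,\Phi(y)$ and $\Phi(x)\,E_{QI}\,\Phi(y)$ (the identity off a ball is an isometry, and hence a coarse quasi-isometry at finite Gromov--Hausdorff distance). The reverse implication is the heart of the proof and the principal obstacle: one must prove a rigidity statement to the effect that any coarse quasi-isometry (respectively, any family of $\varepsilon$-isometries witnessing finite Gromov--Hausdorff distance) $\Phi(x)\to\Phi(y)$ must, for all sufficiently large $n$, match the gadget $W_n(x_n)$ with $W_n(y_n)$, whence $x_n=y_n$ eventually. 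The design tension is clear: the gadgets must be rigid enough, and the separations $r_n$ large enough, for their local invariants to survive the distortion constants $(\lambda,C)$ of an arbitrary quasi-isometry at scale $n$; yet the gadgets must not be so large that altering finitely many $x_n$ fails to produce only a bounded perturbation of the resulting space. Balancing these two requirements---typically by letting the diameters of the $W_n$ grow slowly while the separations $r_n$ grow quickly---is the delicate geometric core of the argument, and the same construction (with perhaps slightly different gadgets for the softer $E_{GH}$-rigidity versus the coarser $E_{QI}$-rigidity) should serve for both parts of the theorem.
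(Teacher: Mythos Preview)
Your overall strategy---reduce a relation known not to be Borel reducible to any Polish action, and invoke Kechris--Louveau---is exactly the paper's. The difference is the choice of source relation: you aim to reduce $E_1$ directly, whereas the paper reduces the coarser relation $E_{K_\sigma}$ on $\prod_{n\ge2}\{1,\dots,n\}$ (where $x\sim y$ iff $\sup_n|x_n-y_n|<\infty$) and then cites the known fact $E_1\le_B E_{K_\sigma}$ to conclude $E_{K_\sigma}\nleq_B E^X_G$.

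This difference is not cosmetic; your sketch has a real gap precisely at the step you flag as delicate. A $\lambda$-bi-Lipschitz bijection between nets, or a correspondence witnessing $d_{GH}\le C$, will at best match $W_n(x_n)$ to $W_n(y_n)$ and let you compare their metric invariants up to multiplicative error $\lambda$ and additive error $O(C)$. That is exactly enough to conclude that $|x_n-y_n|$ is bounded---the $E_{K_\sigma}$ relation---but not that $x_n=y_n$. To recover a real (or unbounded integer) parameter \emph{exactly} from a coarse equivalence, your gadget would have to be simultaneously (i) of diameter bounded independently of $x_n\in\R$ for each fixed $n$ (so that changing finitely many coordinates perturbs the space only boundedly), and (ii) carrying an invariant that survives arbitrary $(\lambda,C)$-distortion and still pins down $x_n$ exactly. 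These pull in opposite directions; letting the gadget diameters grow with $n$ shrinks the \emph{relative} error but never yields exact recovery of a continuous parameter, and if instead the gadget size grows with $x_n$ then you lose control of gadget size versus separation (since $x_n$ is unbounded). The paper sidesteps this entirely: with $x_n\in\{1,\dots,n\}$ the gadget size $e^{x_n}\le e^n$ is dominated by the separation $e^{(n+1)^2}$, and the reverse implication only needs $|x_n-y_n|\le\max\{\ln\lambda,\ln(C/2)\}$ for large $n$, which a bi-Lipschitz estimate gives immediately. In effect the paper has pushed the passage from ``bounded difference'' to ``eventual equality'' into the cited reduction $E_1\le_B E_{K_\sigma}$ and matched the geometric target to what quasi-isometries can actually detect.
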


The theory of turbulence is extended in \cite{AlvarezCandel:turbulent}
to more general equivalence relations on Polish spaces, and it is
applied to \(E_{QI}\) and \(E_{GH}\). This is a non-trivial extension
by Theorem~\ref{t: E_GH and E_QI}.

The proof of Theorem~\ref{t: E_GH and E_QI} uses the following. Let
$E_1$ be the equivalence relation on $\R^\N$ consisting of the pairs
$(x,y)$, with $x=(x_n)$ and $y=(y_n)$, such that there is some
$N\in\N$ so that $x_n=y_n$ for all $n\ge N$ (the relation of eventual
agreement). We have $E_1\nleq_BE^X_G$ for any Polish group $G$ and any
Polish $G$-space $X$ \cite[Theorem 4.2]{KechrisLouveau1997} (see also
\cite[Theorem~8.2]{Hjorth2000} for a different proof).

On the other hand, let $E_{K_\sigma}$ be the equivalence relation on
$\prod_{n=2}^\infty\{1,\dots,n\}$ consisting of the pairs $(x,y)$,
with $x=(x_n)$ and $y=(y_n)$, such that $\sup_n|x_n-y_n|<\infty$. We
have $E\le_BE_{K_\sigma}$ for any $K_\sigma$ equivalence
relation\footnote{Recall that a subset of a topological space is
  called $K_\sigma$ when it is a countable union of compact subsets.}
$E$ \cite[Theorem~17 and Proposition~19]{Rosendal2005}, and therefore
$E_1\le_BE_{K_\sigma}$ because $E_1$ is $K_\sigma$
\cite{Rosendal2005}, \cite[Exercise~8.4.3]{Gao2009}; in particular,
$E_{K_\sigma}\nleq_BE^X_G$ for any Polish group $G$ and every Polish
$G$-space $X$. Therefore Theorem~\ref{t: E_GH and E_QI} follows by
showing that $E_{K_\sigma}\leq_BE_{GH}$ and $E_{K_\sigma}\leq_BE_{QI}$
(Proposition~\ref{p: E_GH and E_QI}).

The relations $E_{GH}$ and $E_{QI}$ resemble the equivalence relation
$E_{\ell_\infty}$ on $\R^\N$ defined by the action of\footnote{Recall
  that $\ell_\infty\subset\R^\N$ is the linear subspace of bounded
  sequences, and $C_b(\R)\subset C(\R)$ is the linear subspace of
  bounded continuous functions.} $\ell_\infty$ on $\R^\N$ by
translations, or the equivalence relation $E_\infty$ on $C(\R)$
defined by the action of $C_b(\R)$. Thus Proposition~\ref{p: E_GH and
  E_QI} has some analogy with the property
$E_{K_\sigma}\sim_BE_{\ell_\infty}$
\cite[Proposition~19]{Rosendal2005}; in particular, $E_1\le_B
E_{\ell_\infty}$ (see also \cite[Theorem~8.4.2]{Gao2009}). It also has
some similarity with the property $E_{K_\sigma}\le_BE_\infty$, which
follows because $E_{\ell_\infty}\le_cE_\infty$; this reduction can be
realized by the map $\R^\N\to C(\R)$, assigning to each element its
canonical continuous piecewise affine extension that is constant on
$(-\infty,0]$.


\section{The Gromov space}\label{ss: Gromov sp}

Let $M$ be a metric space and let $d_M$, or simply $d$, be its
distance function. The \emph{Hausdorff distance} between two non-empty
subsets, $A,B\subset M$, is given by
  \[ 
  H_d(A,B)=\operatorname{max}\biggl\{\sup_{a\in A}\inf_{b\in B}d(a,b),
  \sup_{b\in B}\inf_{a\in A}d(a,b)\biggr\}\;.
  \] Observe that $H_d(A,B)=H_d(\overline{A},\overline{B})$, and
$H_d(A,B)=0$ if and only if $\overline{A}=\overline{B}$. Also, it is
well known and easy to prove that $H_d$ satisfies the triangle
inequality, and its restriction to the family of non-empty compact subsets of
$M$ is finite valued, and moreover complete if $M$ is complete.

Let $M$ and $N$ be arbitrary non-empty metric spaces. A metric on
$M\sqcup N$ is called \emph{admissible} if its restrictions to $M$ and
$N$ are $d_M$ and $d_N$, where $M$ and $N$ are identified with their
canonical injections in $M\sqcup N$. The \emph{Gromov-Hausdorff
  distance} (or \emph{GH distance}) between $M$ and $N$ is defined by
  \[ 
    d_{GH}(M,N)=\inf_dH_d(M,N)\;,
  \] where the infimum is taken over all admissible metrics $d$ on
$M\sqcup N$.  It is well known that
$d_{GH}(M,N)=d_{GH}(\overline{M},\overline{N})$, where $\overline{M}$
and $\overline{N}$ denote the completions of $M$ and $N$,
$d_{GH}(M,N)=0$ if $\overline{M}$ and $\overline{N}$ are
isometric, $d_{GH}$ satisfies the triangle inequality, and
$d_{GH}(M,N)<\infty$ if $\overline{M}$ and $\overline{N}$ are compact.

There is also a pointed version of $d_{GH}$ which satisfies analogous
properties: the (\emph{pointed}) \emph{Gromov-Hausdorff distance}
(or \emph{GH distance}) between two pointed metric spaces, $(M,x)$
and $(N,y)$, is defined by
  \begin{equation}\label{d_GH(M,x;N,y)} d_{GH}(M,x;N,y) =
\inf_d\max\{d(x,y),H_d(M,N)\}\;,
\end{equation} where the infimun is taken over all admissible
metrics  \(d\) on
$M\sqcup N$.

A metric space, or its distance function, is called \emph{proper} (or
\emph{Heine-Borel}) if every open ball has compact closure. This
condition is equivalent to the compactness of the closed balls, which
means that the distance function to a fixed point is a proper
function. Any proper metric space is complete and locally compact, and
its cardinality is not greater than the cardinality of the
continuum. Therefore it may be assumed that their underlying sets are
subsets of \(\R\). With this assumption, it makes sense to consider
the set $\MM_*$ of isometry classes, $[M,x]$, of pointed proper metric
spaces, $(M,x)$. The set $\MM_*$ is endowed with a topology introduced
by M.~Gromov~\cite[Section~6]{Gromov1999},~\cite{Gromov1981}, which
can be described as follows.

For a metric space $X$, two subspaces, $M,N\subset X$, two points,
$x\in M$ and $y\in N$, and a real number $R>0$, let \(
H_{d_X,R}(M,x;N,y)\) be given by
  \[ H_{d_X,R}(M,x;N,y)=\max\biggl\{\sup_{u\in
B_M(x,R)}d_X(u,N),\sup_{v\in B_N(y,R)}d_X(v,M)\biggr\}\;.
  \] Then, for $R,r>0$, let $U_{R,r}\subset\MM_*^2$ denote
the subset of pairs \(([M,x],[N,y])\) for which there is an admissible
metric, $d$, on $M\sqcup N$ so that 
  \[ 
    \max\{d(x,y),H_{d,R}(M,x;N,y)\}<r\;.
  \] 
Let \(\Delta\subset\MM_*^2\) denote the diagonal.

\begin{lemma}\label{l: U_R,r}
  The following properties hold:
    \begin{enumerate}[{\rm(}i{\rm)}]
  
      \item\label{i: bigcap_R,r>0 U_R,r = Delta} $\bigcap_{R,r>0}U_{R,r}=\Delta$;
      
      \item\label{i: U_R,r is symmetric} each $U_{R,r}$ is symmetric;
      
      \item\label{i: if R le S then U_R,r supset U_S,r} if $R\le S$, then $U_{R,r}\supset U_{S,r}$ for all $r>0$;
    
      \item\label{i: U_R,r = bigcup_s<r U_R,s} $U_{R,r}=\bigcup_{s<r}U_{R,s}$ for all $R,r>0$; and
    
      \item\label{i: U_S,r circ U_S,s subset U_R,r+s} \(U_{S,r}\circ U_{S,s}\subset U_{R,r+s}\), where \(S=R+2\max\{r,s\}\).
  
    \end{enumerate}
\end{lemma}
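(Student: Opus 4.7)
The overall plan is to dispatch (ii)–(iv) directly from the definitions, to obtain (v) by the standard construction that glues two admissible metrics across the common factor $N$, and to treat (i) as the main obstacle, where one must extract a pointed isometry from a sequence of approximating admissible metrics. Part (ii) is immediate because both $d(x,y)$ and $H_{d,R}(M,x;N,y)$ are symmetric in the two spaces. For (iii), $R \le S$ yields $B_M(x,R) \subset B_M(x,S)$ and $B_N(y,R) \subset B_N(y,S)$, whence $H_{d,R}(M,x;N,y) \le H_{d,S}(M,x;N,y)$ for every admissible $d$, giving $U_{S,r} \subset U_{R,r}$. For (iv), the inclusion $\supset$ is trivial; conversely, any admissible witness for $U_{R,r}$ realizes some value $t < r$ for $\max\{d(x,y), H_{d,R}(M,x;N,y)\}$, and one picks $s$ with $t < s < r$.

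For (v), given witnesses $d_1$ on $M \sqcup N$ for $([M,x],[N,y]) \in U_{S,r}$ and $d_2$ on $N \sqcup P$ for $([N,y],[P,z]) \in U_{S,s}$, I would glue through $N$ by defining an admissible metric $d$ on $M \sqcup P$ as $d(m,p) = \inf_{n \in N}(d_1(m,n) + d_2(n,p))$ for $m \in M$, $p \in P$, keeping $d$ equal to $d_M$ on $M \times M$ and to $d_P$ on $P \times P$. A routine verification using that $d_1$ and $d_2$ agree on $N \times N$ shows $d$ is a metric, and $d(x,z) \le d_1(x,y) + d_2(y,z) < r + s$. Given $u \in B_M(x,R) \subset B_M(x,S)$, choose $v \in N$ with $d_1(u,v) < r$; then $d_1(v,y) \le d_1(v,u) + d_1(u,x) + d_1(x,y) < r + R + r \le S$, so $v \in B_N(y,S)$ and there exists $w \in P$ with $d_2(v,w) < s$, giving $d(u,w) < r + s$. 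The symmetric argument for points in $B_P(z,R)$ completes the verification that $([M,x],[P,z]) \in U_{R,r+s}$.

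Part (i) is the substantive content. The inclusion $\Delta \subset \bigcap_{R,r>0} U_{R,r}$ follows for $[M,x] = [N,y]$ by transporting $d_M$ to an admissible metric on $M \sqcup N$ via any pointed isometry, which makes both $d(x,y)$ and $H_{d,R}(M,x;N,y)$ vanish. Conversely, assume $([M,x],[N,y]) \in U_{n,1/n}$ for every $n$, witnessed by admissible metrics $d_n$ on $M \sqcup N$. Fix countable dense sequences $(a_i)$ in $M$ and $(b_j)$ in $N$. Because $d_n(a_i,b_j) \le d_M(a_i,x) + 1/n + d_N(y,b_j)$ is bounded uniformly in $n$, a Cantor diagonal extraction produces a subsequence along which $d_n(a_i,b_j)$ converges for all $(i,j)$. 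The condition $H_{d_n,R}(M,x;N,y) < 1/n$, combined with compactness of closed balls in the proper spaces $M$ and $N$, then yields in the limit a distance-preserving map with dense image from the $(a_i)$ into $N$, and symmetrically from the $(b_j)$ into $M$, sending $x$ to $y$. Completeness (a consequence of properness) extends this to a pointed isometry $(M,x) \to (N,y)$, so $[M,x] = [N,y]$. The delicate step is verifying simultaneously that the limit correspondence is isometric and has dense image on both sides, which is precisely where the uniformity in $R$ of the Hausdorff bounds, together with the compactness of balls granted by properness, is essential.
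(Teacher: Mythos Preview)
Your proof is correct, and your argument for~(v) is essentially identical to the paper's: glue the two admissible metrics through the intermediate space via $d(m,p)=\inf_n(d_1(m,n)+d_2(n,p))$, then use the triangle inequality to land the intermediate approximant inside the $S$-ball so that the second witness applies. The only cosmetic difference is the labeling of the three spaces.

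Where you diverge from the paper is in emphasis. The paper declares~(i)--(iv) ``elementary'' and spends all its effort on~(v); you invert this, calling~(i) ``the substantive content.'' Your instinct is not unreasonable: extracting a genuine pointed isometry from the hypothesis $([M,x],[N,y])\in U_{n,1/n}$ for every $n$ does require a compactness/diagonal argument of the Arzel\`a--Ascoli type, and your sketch (extract limits of $d_n(a_i,b_j)$ on countable dense sets, use properness to produce limit points in $N$, check the resulting correspondence is isometric and bijective) is the standard way to do it. The paper presumably regards this as routine because it is the same mechanism underlying the well-known fact that pointed compact (or proper) metric spaces at pointed Gromov--Hausdorff distance zero are isometric, which is foundational in this subject and taken for granted. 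So your treatment is more self-contained, while the paper's is terser and relies on the reader knowing that~(i) reduces to a classical statement about $d_{GH}$.
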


\begin{proof} Items~(\ref{i: bigcap_R,r>0 U_R,r = Delta})--(\ref{i:
    U_R,r = bigcup_s<r U_R,s}) are elementary. To prove~(\ref{i: U_S,r
    circ U_S,s subset U_R,r+s}), let \([M,x],[N,y]\in\MM_*\) and
  \([P,z]\in U_{S,r}(N,y)\cap U_{S,s}(M,x)\). Then there are
  admissible metrics, $d$ on $M\sqcup P$ and $\bar d$ on $N\sqcup P$,
  such that $d(x,z)<r$, $r_0:=H_{d,S}(M,x;P,z)<r$, $\bar d(y,z)<s$ and
  $s_0:=H_{\bar d,S}(N,y;P,z)<s$. Let $\hat d$ be the admissible
  metric on $M\sqcup N$ such that
    \[ 
      \hat d(u,v)=\inf\bigl\{\,d(u,w)+\bar d(w,v)\mid w\in P\,\bigr\}
    \] 
  for all $u\in M$ and $v\in N$. Then
    \[ 
      \hat d(x,y)\le d(x,z)+\bar d(z,y)<r+s\;.
    \] 
  For each $u\in B_M(x,R)$, there is some $w\in P$ such that
$d(u,w)<r_0$. Then
    \[ 
      d_P(z,w)\le d(z,x)+d_M(x,u)+d(u,w)<r+R+r_0<S\;.
    \] 
  So there is some $v\in N$ such that $\bar d(w,v)<s_0$, and we
have
    \[ \hat d(u,v)\le d(u,w)+\bar d(w,v)<r_0+s_0\;.
    \] Hence $\hat d(u,N)<r_0+s_0$ for all $u\in B_M(x,R)$. Similarly,
$\hat d(v,M)<r_0+s_0$ for all $v\in B_N(y,R)$. Therefore $H_{\hat
d,R}(M,x;N,y)\le r_0+s_0<r+s$. Then $[N,y]\in U_{R,r+s}(M,x)$.
\end{proof}

By Lemma~\ref{l: U_R,r}, the sets $U_{R,r}$ form a
base of entourages of a metrizable uniformity on $\MM_*$. Endowed with
the induced topology, $\MM_*$ is what is called the \emph{Gromov
space} in this paper. It is well known that $\MM_*$ is a Polish space 
(see \emph{e.g.}\ Gromov~\cite{Gromov1999} or Petersen~\cite{Petersen1998});
in particular, a countable dense subset is defined by the pointed
finite metric spaces with $\Q$-valued metrics.

\section{Equivalence relations on the Gromov space}\label{ss: equivalence relations}

Recall the following terminology. A map between metric spaces, $\phi:M\to N$, is called
\emph{bi-Lipschitz} if there is some $\lambda\ge1$ such that
 \[
 \lambda^{-1}\,d_M(u,v)\le d_N(\phi(u),\phi(v))\le \lambda\,d_M(u,v)
 \]
 for all $u,v\in M$. The term \emph{$\lambda$-bi-Lipschitz} may be
 also used in this case. A subset $A\subset M$ is called a \emph{net}\footnote{This term is used by Gromov with this meaning \cite[Definition~2.14]{Gromov1999}. Other authors use it with other meanings.} (respectively, \emph{separated}) if there is some $C\ge0$ such that $d_M(x,A)\le C$ for all $x\in M$ (respectively, there is some $\delta>0$ so that $d_M(x,y)\ge\delta$ if $x\ne y$). 
The term \emph{$C$-net} (respectively, \emph{$\delta$-separated}) may be also used in this case. There always exist separated nets \cite[Lemma~9.4]{AlvarezCandel:turbulent}. A 
(\emph{coarse}) \emph{quasi-isometry} of $M$ to
 $N$ is a bi-Lipschitz bijection $\phi:A\to B$ for some nets $A\subset M$
 and $B\subset N$. The existence of a quasi-isometry of $M$ to $N$ is
 equivalent to the existence of a finite sequence of metric spaces,
 $M=M_0,\dots,M_{2k}=N$, such that $d_{GH}(M_{2i-2},M_{2i-1})<\infty$
 and there is a bi-Lipschitz bijection $M_{2i-1}\to M_{2i}$ for all
 $i\in\{1,\dots,k\}$. A \emph{pointed} (\emph{coarse})
 \emph{quasi-isometry} is defined in the same way, by using a pointed
 bi-Lipschitz bijection between nets that contain the distinguished
 points. The existence of a pointed quasi-isometry has an analogous
 characterization involving pointed Gromov-Hausdorff distances and
 pointed bi-Lipschitz bijections.

The following equivalence relations are considered on $\MM_*$:
\begin{itemize}
    
\item The \emph{canonical relation}, $E_{\text{\rm can}}$, is defined
  by varying the distinguished point; \emph{i.e.}, $E_{\text{\rm
      can}}$ consists of the pairs of the form $([M,x],[M,y])$ for any
  proper metric space $M$ and all $x,y\in M$.
    
\item The \emph{Gromov-Hausdorff relation}, $E_{GH}$, consists of the
  pairs $([M,x],[N,y])\in\MM_*^2$ such that
  $d_{GH}(M;N)<\infty$, or, equivalently, $d_{GH}(M,x;N,y)<\infty$.
    
\item The \emph{Lipschitz relation}, $E_{\text{\rm Lip}}$, consists of
  the pairs $([M,x],[N,y])\in\MM_*^2$ such that there is a
  bi-Lipschitz bijection $M\to N$. If $M$ and $N$ are separated, 
  this is equivalent to the existence of a
  pointed bi-Lipschitz bijection $(M,x)\to(N,y)$.
        
\item The \emph{quasi-isometric relation}, $E_{QI}$, consists of the
  pairs $([M,x],[N,y])\in\MM_*^2$ such that there is a
  quasi-isometry of $M$ to $N$, or, equivalently, there is a pointed
  quasi-isometry of $(M,x)$ to $(N,y)$. By the above observations,
  $E_{QI}$ is the smallest equivalence relation over $\MM_*$ that
  contains $E_{GH}\cup E_{\text{\rm Lip}}$.
        
\end{itemize}
Since $E_{\text{\rm can}}\subset E_{GH}\cap E_{QI}$, it follows that
$\MM_*/E_{GH}$ can be identified with the set of classes of proper
metric spaces modulo finite GH distance, and $\MM_*/E_{QI}$ can be
identified with the set of quasi-isometry types of proper metric spaces.

\section{Non-reduction to Polish actions}\label{s: non-reduction}

As indicated in Section~\ref{s: intro}, Theorem~\ref{t: E_GH and E_QI}
follows from the following.

\begin{prop}\label{p: E_GH and E_QI}
	$E_{K_\sigma}\le_cE_{GH}$ and $E_{K_\sigma}\le_cE_{QI}$.
\end{prop}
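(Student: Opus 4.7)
My plan is to construct two separate continuous reductions $\theta:\prod_{n\ge 2}\{1,\dots,n\}\to\MM_*$, one for each half of the proposition, both of the form $x\mapsto[M_x,*_x]$ where $M_x$ is a pointed proper geodesic ``comb'' tree. Fix once and for all a super-exponentially growing sequence $f:\N\to\N$ (for instance $f(n+1)=2^{F_n+n}$ with $F_n=\sum_{k=2}^{n-1}f(k)$, so that eventually the spine gaps dwarf any spike length). The spine of $M_x$ is a half-line supporting vertices $p_n$ (for $n\ge 2$) with $d(p_n,p_{n+1})=f(n)$ and basepoint $*_x:=p_2$; at each $p_n$ a single spike (edge) of length $L(x_n)$ is attached, ending in a leaf $q_n$. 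For the $E_{GH}$ reduction I take $L(k)=k$; for the $E_{QI}$ reduction I take $L(k)=e^k$.

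Continuity of both $\theta$'s is transparent: if $x$ and $x'$ agree on coordinates $\le N$ then the closed $R$-balls around $*_x$ and $*_{x'}$ are isometric for every $R<F_{N+1}$, so $([M_x,*_x],[M_{x'},*_{x'}])\in U_{R,r}$ for all $r>0$, and the $U_{R,r}$ form a base of entourages. For the forward direction of the $E_{GH}$ reduction (with $L(k)=k$), an explicit admissible metric on $M_x\sqcup M_{x'}$ identifying the two spines pointwise and matching spike-heights up to $\min(x_n,x'_n)$ gives Hausdorff distance bounded by $\sup_n|x_n-x'_n|$, hence $d_{GH}<\infty$. For the forward direction of the $E_{QI}$ reduction (with $L(k)=e^k$), I choose nets $A\subset M_x$, $B\subset M_{x'}$ consisting of equidistant points on each spike together with integer points on the spine, and define $\phi:A\to B$ to be the identity on the spine and the affine rescaling by factor $e^{x'_n-x_n}$ on the $n$th spike; its global bi-Lipschitz constant is $e^{\sup_n|x_n-x'_n|}<\infty$, so $\phi$ is a coarse quasi-isometry.

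The backward directions both lean on the super-exponential growth of $f$ to localize correspondences spike by spike. For $E_{GH}$: given an admissible metric $d$ on $M_x\sqcup M_{x'}$ with $d(*_x,*_{x'})<R$ and $H_d(M_x,M_{x'})<R$, each spike tip $q_n\in M_x$ has a witness $w\in M_{x'}$ with $d(q_n,w)<R$, so $d_{M_{x'}}(w,*_{x'})\in F_n+x_n\pm 2R$. Once $n$ is large enough that $f(n+1)>100(n+R)$, the tree structure of $M_{x'}$ confines $w$ to a bounded neighborhood of the $n$th spike of $M_{x'}$; a short case analysis (applying the same argument to $q'_n$ in the symmetric role) then yields $|x_n-x'_n|\le O(R)$. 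For $E_{QI}$: a $\lambda$-bi-Lipschitz bijection $\phi:A\to B$ must coarsely preserve the spine, and the super-exponential growth of $f$ forces the induced permutation of spine-vertices to be the identity for $n\ge N(\lambda)$; the $n$th spike (of length $e^{x_n}$) then corresponds to the $n$th spike of $M_{x'}$ (of length $e^{x'_n}$), and bi-Lipschitz comparison of diameters gives $e^{x'_n}\in[e^{x_n}/\lambda,\lambda e^{x_n}]$, equivalently $|x_n-x'_n|\le\log\lambda$.

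The main obstacle in both backward directions is controlling \emph{cross-spike contamination}, i.e.\ ensuring that the witness (for $E_{GH}$) or the image of the $n$th spike (for $E_{QI}$) lies close to the corresponding spike of $M_{x'}$, rather than wandering onto the spine or onto a neighboring spike. This is precisely what the super-exponential calibration of $f$ is designed to arrange. The key conceptual ingredient is the choice $L(k)=e^k$ in the $E_{QI}$ reduction: this logarithmic encoding converts the inherently multiplicative distortion of bi-Lipschitz maps into additive control on $x_n-x'_n$, matching the additive nature of $E_{K_\sigma}$, which would otherwise be incompatible with coarse quasi-isometric equivalence on unbounded features.
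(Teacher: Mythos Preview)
Your construction is essentially the same idea as the paper's: encode $x_n$ as a ``height'' feature placed at the $n$th of a sequence of super-exponentially separated sites, using height $x_n$ for the $E_{GH}$ reduction and height $e^{x_n}$ for the $E_{QI}$ reduction (the latter precisely so that the multiplicative distortion of a bi-Lipschitz map becomes additive control on the $x_n$'s). The paper implements this with a \emph{discrete} model rather than a geodesic comb: $M_x$ consists of just two points $P^\pm_{x,n}=\bigl(\sum_{i\le n}e^{i^2},\,\pm e^{x_n}\bigr)$ per site (or $\pm x_n$ for the $E_{GH}$ version), with the restriction of an $\R$-tree metric on $\R^2$. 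The backward argument for $E_{QI}$ is then a short combinatorial count showing $\phi(M_{x,n}\cap A)=M_{y,n}\cap B$ for all large $n$, after which comparing $d(P^+,P^-)=2e^{x_n}$ with $2e^{y_n}$ immediately gives $|x_n-y_n|\le\ln\lambda$, with a one-line side case when the $C$-net misses one of the two points.

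Your geodesic comb works, but it makes the backward directions genuinely harder, and your sketch skips the non-trivial parts. For $E_{QI}$: the assertions ``$\phi$ coarsely preserves the spine'' and ``the induced permutation of spine-vertices is the identity'' both require real arguments (the paper's analogue, Claim~4, is the longest step of its proof), and ``the $n$th spike corresponds to the $n$th spike'' is not automatic from $\phi(p_n)\approx p'_n$---you still have to rule out the spike being absorbed into the spine, and handle separately the case where $e^{x_n}$ is small relative to the net constant $C$ so that no net point witnesses the spike tip. For $E_{GH}$, confining the witness $w$ ``near the $n$th spike'' does not by itself bound $|x_n-x'_n|$, since $w$ may lie on the spine; one needs an extra comparison (e.g.\ a tripod argument using a spine point on the far side of $p'_n$) to force a contradiction when $x'_n>x_n+O(R)$. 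Finally, your forward $E_{QI}$ step is mis-stated: equidistant integer nets on spikes of different lengths $e^{x_n}$ and $e^{x'_n}$ have different cardinalities, so no bijection exists as described. The clean fix is to observe that the map $M_x\to M_{x'}$ which is the identity on the spine and the affine rescaling on each spike is already an $e^C$-bi-Lipschitz \emph{homeomorphism} of the whole spaces, hence a quasi-isometry in the paper's sense with $A=M_x$, $B=M_{x'}$; no auxiliary nets are needed. With these repairs your argument goes through, but the paper's discrete two-points-per-site model yields a noticeably shorter proof of the hard direction.
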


\begin{proof}
  Let us proof first that $E_{K_\sigma}\le_cE_{QI}$, which is more
  difficult. Consider the metric $d$ on $\R^2$ defined by
  \[
  d((u,v),(u',v'))=
  	\begin{cases}
		|v|+|u-u'|+|v'| & \text{if $u\ne u'$}\\
		|v-v'| & \text{if $u=u'$}\;.
	\end{cases}
  \]
  This is the metric of an $\R$-tree. For each
  $x=(x_n)\in\prod_{n=2}^\infty\{1,\dots,n\}$ and $n\ge2$, let
  \[
  P^\pm_{x,n}=(\textstyle{\sum_{i=2}^ne^{i^2}},\pm
  e^{x_n})\in\R^2\;,\quad M_{x,n}=\{P^+_{x,n},P^-_{x,n}\}\;,
  \]
  and let $M_x:=\bigcup_{n=2}^\infty M_{x,n}$, equipped with the
  restriction $d_x$ of $d$. Given any
  $x=(x_n)\in\prod_{n=2}^\infty\{1,\dots,n\}$, if $A\subset M_x$ is
  $C$-net for some $C\ge0$, it easily follows that
  \begin{gather}
    e^{n^2}\ge C\Longrightarrow A\cap M_{x,n}\ne\emptyset\;,
    \label{e^n^2 ge C}\\
    (e^{n^2}\ge C\quad\&\quad e^{x_n}>C/2)\Longrightarrow
    M_{x,n}\subset A\;.
    \label{e^n^2 ge C & e^{x_n}>C/2}
  \end{gather}
  Let $\theta:\prod_{n=2}^\infty\{1,\dots,n\}\to\MM_*$ be defined by
  $\theta(x)=[M_x,P^+_{x,2}]$.
	
  \begin{claim}\label{cl: theta is cont, QI}
    $\theta$ is continuous.
  \end{claim}
	
  With the notation of Section~\ref{ss: Gromov sp}, given
  $x=(x_n)\in\prod_{n=2}^\infty\{1,\dots,n\}$ and $R,r>0$, we have to
  prove that $\theta^{-1}(U_{R,r}(\theta(x)))$ is a neighborhood of
  $x$ in $\prod_{n=2}^\infty\{1,\dots,n\}$. Take some integer
  $n_0\ge2$ such that $e^2+\sum_{i=2}^{n_0}e^{i^2}+e^{n_0}>R$, and
  therefore
  $B_{M_x}(P^+_{x,2},R)\subset\bigcup_{n=2}^{n_0}M_{x,n}$. Let
  $\NN(x,n_0)$ be the open neighborhood of $x$ in
  $\prod_{n=2}^\infty\{1,\dots,n\}$ consisting of the elements
  $y=(y_n)$ such that $y_n=x_n$ if $n\le n_0$. Then
  $P^\pm_{x,n}=P^\pm_{y,n}$ for $2\le n\le n_0$ and $y\in V$,
  obtaining $d(P^+_{x,2},P^+_{y,2})=0$ and
  $H_{d,R}(M_x,P^+_{x,2};M_y,P^+_{y,2})=0$ for the isometric inclusion
  of $M_x$ and $M_y$ in $\R^2$ with $d$. Thus
  $\theta(\NN(x,n_0))\subset U_{R,r}(\theta(x))$, completing the proof
  of Claim~\ref{cl: theta is cont, QI}.
	
  \begin{claim}\label{cl: (theta times theta)(E_K_sigma) subset E_Lip}
    $(\theta\times\theta)(E_{K_\sigma})\subset E_{\text{\rm Lip}}$,
    and therefore $(\theta\times\theta)(E_{K_\sigma})\subset E_{QI}$.
  \end{claim}
	
  This claim can be easily proved as follows. Let $(x,y)\in
  E_{K_\sigma}$ for $x=(x_n)$ and $y=(y_n)$ in
  $\prod_{n=2}^\infty\{1,\dots,n\}$. Thus there is some $C\ge0$ such
  that $|x_n-y_n|\le C$ for all $n$. Consider the pointed bijection
  $\phi:(M_x,P^+_{x,2})\to(M_y,P^+_{y,2})$ defined by
  $\theta(P^\pm_{x,n})=P^\pm_{y,n}$. Then, with $\lambda=e^C$, we have
  \begin{multline*}
    d_x(P^+_{x,n},P^-_{x,n})=2e^{x_n}\le2e^{y_n+C}=\lambda\,d_y(P^+_{y,n},P^-_{y,n})\\
    =\lambda\,d_y(\phi(P^+_{x,n}),\phi(P^-_{x,n}))\;,
  \end{multline*}
  and, similarly,
  \[
  d_x(P^+_{x,n},P^-_{x,n})\ge\frac{1}{\lambda}\,d_y(\phi(P^+_{x,n}),\phi(P^-_{x,n}))\;.
  \]
  On the other hand, for $P\in M_{x,m}$ and $Q\in M_{x,n}$ with $m<n$,
  \begin{multline*}
    d_x(P,Q)=e^{x_m}+\sum_{i=m+1}^ne^{i^2}+e^{x_n}
    \le e^{y_m+C}+\sum_{i=m+1}^ne^{i^2}+e^{y_n+C}\\
    \le\lambda\left(e^{y_m}+\sum_{i=m+1}^ne^{i^2}+e^{y_n}\right)
    =\lambda\,d_y(\phi(P),\phi(Q))\;,
  \end{multline*}
  and, similarly,
  \[
  d_x(P,Q)\ge\frac{1}{\lambda}\,d_y(\phi(P),\phi(Q))\;.
  \]
  Thus $\phi$ is a $\lambda$-bi-Lipschitz bijection, completing the
  proof of Claim~\ref{cl: (theta times theta)(E_K_sigma) subset
    E_Lip}.

  \begin{claim}\label{cl: (theta times theta)^-1(E_QI) subset
      E_K_sigma}
    $(\theta\times\theta)^{-1}(E_{QI})\subset E_{K_\sigma}$.
  \end{claim}
	
  To prove this assertion, take some $x=(x_n)$ and $y=(y_n)$ in
  $\prod_{n=2}^\infty\{1,\dots,n\}$ such that
  $(\theta(x),\theta(y))\in E_{QI}$. Then, for some $C\ge0$ and
  $\lambda\ge1$, there are $C$-nets, $A\subset M$ and $B\subset M(y)$
  with $P^+_{x,2}\in A$ and $P^+_{y,2}\in B$, and there is a pointed
  $\lambda$-bi-Lipschitz bijection
  $\phi:(A,P^+_{x,2})\to(B,P^+_{y,2})$.
	
  \begin{claim}\label{cl: frac 1 n e^2n+1 > lambda}
    If $e^{n^2}\ge C$, $\frac{1}{n}e^{2n+1}>\lambda$ and
    $e^{(n+2)^2-(n+1)^2}>3\lambda$, then $\phi(M_{x,n}\cap A)\subset
    M_{y,n}$.
  \end{claim}
	
  Assume the conditions of this claim. Then $A\cap
  M_{x,m}\ne\emptyset$ for all $m\ge n$ by~\eqref{e^n^2 ge
    C}. Furthermore, for $2\le k<\ell\le n$,
  \begin{multline*}
    d_y(\phi(M_{x,n}\cap A),\phi(M_{x,n+1}\cap A))
    \ge\frac{1}{\lambda}\,d_x(M_{x,n}\cap A,M_{x,n+1}\cap A)\\
    >\frac{1}{\lambda}\,e^{(n+1)^2}>ne^{(n+1)^2-2n-1}=ne^{n^2}\ge2e^n+\sum_{i=3}^ne^{i^2}\\
    \ge e^{y_k}+\sum_{i=k+1}^\ell e^{i^2}+e^{y_\ell}=d_y(P',Q')
  \end{multline*}
  for all $P'\in M_{y,k}$ and $Q'\in M_{y,\ell}$. On the other hand,
  for $2\le k<\ell$ with $\ell\ge n+2$,
  \begin{multline*}
			d_y(\phi(P),\phi(Q))
			\le\lambda\,d_x(M_{x,n}\cap A,M_{x,n+1}\cap A)\\
			<\lambda(e^{(n+1)^2}+2e^{n+1})<\lambda3e^{(n+1)^2}<e^{(n+2)^2}\le e^{\ell^2}\\
			<e^{y_k}+\sum_{i=k+1}^\ell e^{i^2}+e^{y_\ell}=d_y(M_{y,k},M_{y,\ell})\;.
		\end{multline*}
	for all $P\in M_{x,n}\cap A$ and $Q\in M_{x,n+1}\cap A$. Therefore, either
		\begin{equation}\label{phi(M_x,n cap A subset M_y,n & phi(M_x,n+1 cap A subset M_y,n+1}
			\phi(M_{x,n}\cap A)\subset M_{y,n}\quad\&\quad\phi(M_{x,n+1}\cap A)\subset M_{y,n+1}\;,
		\end{equation}
	or
		\begin{equation}\label{phi((M_x,n cup M_x,n+1) cap A) subset M_y,m}
			\phi((M_{x,n}\cup M_{x,n+1})\cap A)\subset M_{y,m}
		\end{equation}
	for some $m$. In the case~\eqref{phi((M_x,n cup M_x,n+1) cap A) subset M_y,m}, we have
		\begin{multline*}
			2e^m=d_y(\phi(M_{x,n}\cap A),\phi(M_{x,n+1}\cap A))\\
			\ge\frac{1}{\lambda}\,d_x(M_{x,n}\cap A,M_{x,n+1}\cap A)>e^{(n+1)^2}/\lambda\;,
		\end{multline*}
	giving $m>(n+1)^2-\ln(2\lambda)$. Applying this to $n+1$ and $n+2$, we get that, either
		\begin{equation}\label{phi(M_x,n+1 cap A subset M_y,n+1 & phi(M_x,n+2 cap A subset M_y,n+2}
			\phi(M_{x,n+1}\cap A)\subset M_{y,n+1}\quad\&\quad\phi(M_{x,n+2}\cap A)\subset M_{y,n+2}\;,
		\end{equation}
	or
		\begin{equation}\label{phi((M_x,n cup M_x,n+1) cap A) subset M_y,m'}
			\phi((M_{x,n+1}\cup M_{x,n+2})\cap A)\subset M_{y,m'}
		\end{equation}
	for some $m'>(n+2)^2-\ln(2\lambda)$. If~\eqref{phi((M_x,n cup M_x,n+1) cap A) subset M_y,m} and~\eqref{phi((M_x,n cup M_x,n+1) cap A) subset M_y,m'} hold, then $m=m'$ and
        \[
        \phi((M_{x,n}\cup M_{x,n+1}\cup M_{x,n+2})\cap A)\subset
        M_{y,m}\;,
        \]
	which is a contradiction because $\phi$ is a bijection whereas
        \[
        \#((M_{x,n}\cup M_{x,n+1}\cup M_{x,n+2})\cap A)\ge3>2=\#M_{y,m}\;.
        \]
        If~\eqref{phi((M_x,n cup M_x,n+1) cap A) subset M_y,m}
        and~\eqref{phi(M_x,n+1 cap A subset M_y,n+1 & phi(M_x,n+2 cap
          A subset M_y,n+2} hold, then $n+1=m>(n+1)^2-\ln(2\lambda)$,
        which contradicts the condition
        $e^{(n+2)^2-(n+1)^2}>3\lambda$.  So~\eqref{phi(M_x,n cap A
          subset M_y,n & phi(M_x,n+1 cap A subset M_y,n+1} must be
        true, showing Claim~\ref{cl: frac 1 n e^2n+1 > lambda}.
	
	From Claim~\ref{cl: frac 1 n e^2n+1 > lambda}, it easily follows that
		\begin{equation}\label{phi(M_x,n cap A) = M_y,n cap B}
			\phi(M_{x,n}\cap A)=M_{y,n}\cap B
		\end{equation}
	for $n$ large enough. Suppose first that $M_{x,n}\subset A$ for such an $n$, and therefore $M_{y,n}\subset B$ by~\eqref{phi(M_x,n cap A) = M_y,n cap B}. Thus
		\[
			2e^{y_n}=d_y(P^+_{y,n},P^-_{y,n})=d_y(\phi(P^+_{x,n}),\phi(P^-_{x,n}))
			\ge\frac{1}{\lambda}\,d_x(P^+_{x,n},P^-_{x,n})=\frac{2e^{x_n}}{\lambda}\;,
		\]
	giving $y_n\ge x_n-\ln\lambda$. Similarly, $y_n\le x_n+\ln\lambda$, obtaining $|x_n-y_n|\le\ln\lambda$. 
	
	Now, assume that $M_{x,n}\not\subset A$ for such an $n$; in particular, $C>0$. Then $M_{y,n}\not\subset B$ by~\eqref{phi(M_x,n cap A) = M_y,n cap B}. So $e^{x_n},e^{y_n}\le C/2$ by~\eqref{e^n^2 ge C & e^{x_n}>C/2}, giving $x_n,y_n\le\ln(C/2)$, and therefore $|x_n-y_n|\le\ln(C/2)$.
	
	Hence $|x_n-y_n|\le\max\{\ln\lambda,\ln(C/2)\}$ for all $n$ large enough, and therefore $\sup_n|x_n-y_n|<\infty$, obtaining that $(x,y)\in E_{K_\sigma}$. This completes the proof of Claim~\ref{cl: (theta times theta)^-1(E_QI) subset E_K_sigma}.
	
	Claims~\ref{cl: theta is cont, QI},~\ref{cl: (theta times theta)(E_K_sigma) subset E_Lip} and~\ref{cl: (theta times theta)^-1(E_QI) subset E_K_sigma} show that $\theta$ realizes the reduction $E_{K_\sigma}\le_cE_{QI}$.
	
	A similar argument with a slight modification of the definition of $M(x)$, using $P^\pm_{x,n}=(\sum_{i=2}^ne^{i^2},\pm x_n)$, shows that $E_{K_\sigma}\le_BE_{GH}$.
\end{proof}

\begin{rem}
  In Claim~\ref{cl: theta is cont, QI}, $\theta$ is in fact a
  topological embedding, as shows the following argument. First, let
  us prove that $\theta$ is injective. Suppose that
  $\theta(x)=\theta(y)$ for some $x=(x_n)$ and $y=(y_n)$ in
  $\prod_{n=2}^\infty\{1,\dots,n\}$. This means that there is a
  pointed isometry $\phi:(M_xP^+_{x,2})\to(M_y,P^+_{y,2})$. We get
  $\phi(M_{x,n})=M_{y,n}$ for all $n\ge2$ by Claim~\ref{cl: frac 1 n
    e^2n+1 > lambda} with $A=M_x$, $B=M_y$, $C=0$ and $\lambda=1$; in
  fact, the argument can be simplified in this case. Hence, for each
  $n\ge2$,
  \[
  2e^{x_n}=d_x(P^+_{x,n},P^-_{x,n})=d_y(\phi(P^+_{x,n}),\phi(P^-_{x,n}))=d_y(P^+_{y,n},P^-_{y,n})=2e^{y_n}\;,
  \]
  giving $x_n=y_n$. Thus $x=y$.
	
  Finally, let us prove that
  $\phi^{-1}:\phi(\prod_{n=2}^\infty\{1,\dots,n\})\to\prod_{n=2}^\infty\{1,\dots,n\}$
  is continuous at $\phi(x)$ for every
  $x=(x_n)\in\prod_{n=2}^\infty\{1,\dots,n\}$. With the notation of
  the proof of Claim~\ref{cl: theta is cont, QI}, we have to check
  that, for all $n_0\ge2$, there is some $R,r>0$ so that
  $\phi^{-1}(U_{R,r}(\theta(x)))\subset\NN(x,n_0)$. Let
  $y=(y_n)\in\prod_{n=2}^\infty\{1,\dots,n\}$ such that $\theta(y)\in
  U_{R,r}(\theta(x))$ for some $R,r>0$ to be determined later. Then
  there is a metric $d'$ on $M_x\sqcup M_y$, extending $d_x$ and
  $d_y$, such that $d'(P^+_{x,2},P^+_{y,2})<r$ and
  $H_{d',R}(M_x,P^+_{x,2};M_y,P^+_{y,2})<r$. Since $e^n<e^{(n+1)^2}$
  for all $n\ge2$, we can take $R$ such that
  \[
  e^2+\sum_{i=2}^{n_0}e^{i^2}+e^{n_0}<R<e^2+\sum_{i=2}^{n_0+1}e^{i^2}\;,
  \]
  and therefore $B_{M_x}(P^+_{x,2},R)=\bigcup_{n=2}^{n_0}M_{x,n}$ and
  $B_{M_y}(P^+_{y,2},R)=\bigcup_{n=2}^{n_0}M_{y,n}$. So, for each
  $P^\pm_{x,n}$ with $2\le n\le n_0$, there is some $\widehat
  P^\pm_{x,n}\in M_y$ such that $d(P^\pm_{x,n},\widehat
  P^\pm_{x,n})<r$; in particular, we can take $\widehat
  P^\pm_{x,2}=P^\pm_{y,2}$. Let $\widehat M_{x,n}=\{\widehat
  P^+_{x,n},\widehat P^-_{x,n}\}$ for $2\le n\le n_0$.  Choose $r$
  such that $r<1$ and $e^n+r<e^{(n+1)^2}$ for $2\le n\le n_0$. So
  $\widehat M_{x,n}=M_{y,n}$ for $2\le n\le n_0$. Then, by the
  triangle inequality,
  \begin{multline*}
    2e^{x_n}=d_x(P^+_{x,n},P^-_{x,n})\le d_y(\widehat P^+_{x,n},\widehat P^-_{x,n})+2r\\
    =d_y(P^+_{y,n},P^-_{y,n})+2r=2e^{y_n}+2r\;,
  \end{multline*}
  giving $e^{x_n}\le e^{y_n}+r$. Similarly, we get $e^{x_n}\ge
  e^{y_n}-r$. Thus $|e^{x_n}-e^{y_n}|\le r$, obtaining $x_n=y_n$
  because $r<1$. Therefore $y\in\NN(x,n_0)$, as desired.
\end{rem}

\begin{rem}
  According to Claim~\ref{cl: (theta times theta)(E_K_sigma) subset
    E_Lip}, the map $\theta$ of the proof of Proposition~\ref{p: E_GH
    and E_QI} also gives the reduction $E_{K_\sigma}\le_cE_{\text{\rm
      Lip}}$. An analogous property is satisfied with another point of
  view: considering Polish metric spaces as the elements of the space
  of closed subspaces of some universal Polish metric space, like the
  Urysohn space, the relation given by the existence of bi-Lipschitz
  bijections is Borel bi-reducible with $E_{K_\sigma}$
  \cite[Theorem~24]{Rosendal2005}.
\end{rem}



\providecommand{\bysame}{\leavevmode\hbox to3em{\hrulefill}\thinspace}
\providecommand{\MR}{\relax\ifhmode\unskip\space\fi MR }
\providecommand{\MRhref}[2]{%
  \href{http://www.ams.org/mathscinet-getitem?mr=#1}{#2}
}
\providecommand{\href}[2]{#2}

\end{document}